\def\pushright#1{{
   \parfillskip=0pt            
   \widowpenalty=10000         
   \displaywidowpenalty=10000  
   \finalhyphendemerits=0      
  %
   \leavevmode                 
   \unskip                     
   \nobreak                    
   \hfil                       
   \penalty50                  
   \hskip.2em                  
   \null                       
   \hfill                      
   {#1}                        
  %
   \par}}                      
\def\qed{\pushright{$\square$}\penalty-700 \smallskip}
\newtheorem{theorem}{Theorem}[section]
\newtheorem{lemma}[theorem]{Lemma}
\newtheorem{corollary}[theorem]{Corollary}
\newenvironment{proof}%
 {\begin{trivlist}\item[]{\bf Proof }}%
 {\qed\end{trivlist}}
\newcommand{\Bb}
{\mathbb}
\newcommand{\llim}
 {{\mbox{$\lower.95ex\hbox{{\rm lim}}$}\atop{\scriptstyle{\leftarrow}}}{}}
\newcommand{\rlim}
 {{\mbox{$\lower.95ex\hbox{{\rm lim}}$}\atop{\scriptstyle{\rightarrow}}}{}}
\newcommand{\ob}
 {{\rm ob}}
\newcommand{\op}
 {^{\rm op}}
\newcommand{\prarr}
 {\rightrightarrows}
\newcommand{\Set}
 {{\bf Set}}
\newcommand{\stp}[3]
 {\mbox{$#1\! : #2 \to #3$}}
\begin{document}

\title{Notes on Commutation of Limits and Colimits}
\author{Marie Bjerrum \\ {\it Department of Pure Mathematics, University of Cambridge, England} \\
Peter Johnstone \\ {\it Department of Pure Mathematics, University of Cambridge, England} \\
Tom Leinster \\ {\it School of Mathematics, University of Edinburgh, Scotland} \\
and William F.~Sawin%
\thanks{Supported by the National Science Foundation Graduate Research
  Fellowship under Grant No.\ DGE-1148900}%
\\ {\it Department of Mathematics, Princeton University, U.S.A.}}
\date{\today}

\maketitle

\begin{abstract}
We show that there are infinitely many distinct closed classes of colimits (in the sense of the Galois connection induced by commutation of limits and colimits in $\Set$) which are intermediate between the class of pseudo-filtered colimits and that of all (small) colimits. On the other hand, if the corresponding class of limits contains either pullbacks or equalizers, then the class of colimits is contained in that of pseudo-filtered colimits.
\end{abstract}

\section{Introduction}

The fact that colimits over filtered categories commute with finite limits in the category of sets (and in many related categories) is very well known, and most students encounter it as part of a first course in category theory (cf.~\cite{CWM}, section IX 2). In what follows we shall restrict ourselves to limits and colimits in $\Set$. The relation `limits over $\Bb I$ commute with colimits over $\Bb J$' gives rise to a Galois connection between classes of small categories regarded as limit-shapes and the same classes regarded as colimit-shapes: if we set
\[{\cal I}^r =\{{\Bb J}\mid \mbox{$\rlim_{\Bb J}$ commutes with $\llim_{\Bb I}$ for all }{\Bb I}\in {\cal I}\}\]
and
\[{\cal J}^l =\{{\Bb I}\mid \mbox{$\rlim_{\Bb J}$ commutes with $\llim_{\Bb I}$ for all }{\Bb J}\in {\cal J}\}\]
then we say $\cal I$ (resp.\ $\cal J$) is a {\em closed class\/} of limits (resp.\ colimits) if ${\cal I}={\cal I}^{rl}$ (resp.\ ${\cal J}={\cal J}^{lr}$). It is also well known that the class of filtered colimits is closed in this sense (filtered colimits are precisely those which commute with all finite limits --- cf.~Lemma 2.1 below), but the class of finite limits is not: any category admitting an initial functor from a finite category (in the sense of \cite{CWM}, p.~218) is in the closure of the latter class.

There is a proper class of distinct closed classes: for any regular cardinal $\kappa$, the class of $\kappa$-filtered colimits is closed (being the class of colimits which commute with all $\kappa$-small limits), and these classes are all distinct. Nevertheless, it seems not unreasonable to hope that we might be able to classify all the closed classes of colimits which contain that of filtered colimits (equivalently, for which the corresponding closed class of limits is generated by its finite members). There are seven such classes which are more or less well known: in addition to filtered colimits and all colimits, we have the class of pseudo-filtered colimits (also called weakly filtered colimits) which are precisely those colimits commuting with all finite connected limits; a category is pseudo-filtered iff each of its connected components is filtered. The class of sifted colimits (`colimites tamisantes' in French \cite{lair}), being the class of colimits which commute with all finite products, is also reasonably well known by now. Three more classes arise from the anomalous behaviour of the empty set (the empty colimit does not commute with the empty limit, but does commute with all nonempty ones): we may add the empty colimit to the class of filtered colimits or the class of sifted colimits, to obtain the classes which commute with all nonempty finite limits (resp.\ with nonempty finite products), and by cutting down the class of all colimits to the class of connected colimits, we obtain the class which commutes (with all conical limits and) with the empty limit.

Incidentally, we say a category $\Bb I$ is {\em conical\/} if the identity functor ${\Bb I}\to{\Bb I}$ has a cone over it. It is easy to see that this is equivalent to saying that $\Bb I$ {\em has an initial idempotent\/} in either of two possible senses: ($a$) that the idempotent-completion of $\Bb I$ has an initial object, and ($b$) that there is an initial functor ${\Bb E}\to{\Bb I}$ where $\Bb E$ is the `free living idempotent', i.e.\ the two-element monoid $\{1,e\}$ with $e^2=e$. It follows easily that conical limits are absolute (that is, preserved by all functors), and that they are precisely the limits which commute with all colimits in $\Set$ (again, this follows from Lemma 2.1 below, taking ${\Bb J}={\Bb I}\op$).

It is tempting to conjecture that these seven classes might be the only ones containing all filtered colimits, and indeed the first author of the present paper claimed at one stage to have a proof of this. However, the conjecture is false, and the counterexamples are surprisingly easy to describe. An infinite family of counterexamples was discovered by the second author in December 2013, and improved by the third author the following day, when he showed that they could be reduced to commutation of limits and colimits over groups of coprime orders, considered as categories with one object. The fourth author, in response to a query by the third on the MathOverflow website \cite{sawin}, provided a necessary and sufficient condition for limits over one group to commute with colimits over another. The purpose of this note is to present both (some of) the evidence in favour of the conjecture accumulated by the first author, and the group-theoretic counterexamples; we do not claim to be anywhere near a complete classification of the closed classes of colimits containing filtered colimits, and indeed we suspect that such a classification would be very hard to obtain.

\section{Colimits commuting with pullbacks or equalizers}

If one considers the Hasse diagram of the seven closed classes of colimits mentioned in the Introduction, it soon becomes clear that the only interval where one might expect to find further closed classes is that between (pseudo-filtered colimits) and (all colimits). For example, there can be no closed class intermediate between (filtered or empty colimits) and (pseudo-filtered colimits), because as soon as one adds a single non-connected category to the list of limit shapes with which $\Bb J$-colimits must commute, one forces $\Bb J$ to have at most one connected component. In what follows, we therefore concentrate our attention on this interval.

The following necessary condition for commutation of limits and colimits is well-known (and has already been invoked in the Introduction).

\begin{lemma}
Suppose $\Bb I$-limits commute with $\Bb J$-colimits in $\Set$. Then $\Bb J$ has cocones over all diagrams of shape ${\Bb I}\op$.
\end{lemma}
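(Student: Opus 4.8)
The plan is to encode cocones under a given diagram as the elements of a set arising from an iterated limit and colimit, and then to use the commutation hypothesis to force that set to be nonempty. So fix a diagram $F\colon{\Bb I}\op\to{\Bb J}$; the goal is to produce a cocone under it. I would form the functor
\[
H\colon{\Bb I}\times{\Bb J}\to\Set,\qquad H(i,j)={\Bb J}(Fi,\,j),
\]
and check it is covariant in each variable: covariant in $j$ because $\Bb J(Fi,-)$ is, and covariant in $i$ because a morphism $i\to i'$ of $\Bb I$ is sent by $F$ to a morphism $Fi'\to Fi$ of $\Bb J$, whose precomposition gives $\Bb J(Fi,j)\to\Bb J(Fi',j)$. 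The commutation hypothesis then applies to $H$, yielding an isomorphism between $\rlim_{\Bb J}\llim_{\Bb I}H$ and $\llim_{\Bb I}\rlim_{\Bb J}H$.

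Next I would evaluate the two iterated expressions separately. For the limit-of-colimit side, $\rlim_{\Bb J}H(i,-)=\rlim_{j}{\Bb J}(Fi,j)$ is the colimit of a covariant representable; its category of elements is the coslice $Fi/{\Bb J}$, which has an initial object and is therefore connected, so (the colimit of a $\Set$-valued functor being the set of connected components of its category of elements) this colimit is a one-element set. Hence $\llim_{\Bb I}\rlim_{\Bb J}H\cong 1$, the terminal set, for any $\Bb I$. For the colimit-of-limit side, an element of $\llim_{\Bb I}H(-,j)=\llim_{i}{\Bb J}(Fi,j)$ is a compatible family of maps $\psi_i\colon Fi\to j$; unwinding the variance shows the compatibility condition reads $\psi_i\circ F(g)=\psi_{i'}$ for every $g\colon i\to i'$ in $\Bb I$, which is exactly the statement that $(\psi_i)$ is a cocone under $F$ with apex $j$. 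Thus $\llim_{\Bb I}H(-,j)$ is precisely the set of cocones under $F$ with vertex $j$.

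Combining these with the commutation isomorphism gives
\[
\rlim_{j\in{\Bb J}}\{\mbox{cocones under }F\mbox{ with apex }j\}\;\cong\;1,
\]
so in particular this colimit is nonempty. I would then invoke the elementary fact that a colimit in $\Set$ is nonempty if and only if at least one of the sets being combined is nonempty (a colimit is a quotient of a coproduct, and a coproduct of sets is empty exactly when every summand is). It follows that some object $j$ of $\Bb J$ admits a nonempty set of cocones under $F$, and any such cocone is the required one. The content of the argument is concentrated in two places rather than in any genuine difficulty: arranging the variance of $H$ so that $\llim_{\Bb I}H(-,j)$ honestly computes cocones, and recognizing that the colimit of a representable collapses to a point. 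These are the steps to carry out with care, but I expect no serious obstacle.
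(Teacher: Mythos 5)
Your proof is correct and follows essentially the same route as the paper: the same auxiliary functor $(i,j)\mapsto{\Bb J}(Fi,j)$, the same computation that the colimit of a corepresentable is a point so the limit-of-colimits is $1$, and the same identification of $\llim_{\Bb I}H(-,j)$ with the set of cocones with vertex $j$. The only difference is that you spell out the variance check and the nonemptiness argument, which the paper leaves as ``easy to see.''
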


\begin{proof}
Suppose given such a diagram \stp{D}{{\Bb I}\op}{{\Bb J}}. Consider the functor \stp{F}{{\Bb I}\times{\Bb J}}{\Set} sending $(i,j)$ to ${\Bb J}\,(Di,j)$; it is easy to see that $\rlim_{\Bb J}F(i,-)\cong 1$ for all $i$, so that $\llim_{\Bb I}\rlim_{\Bb J}F\cong 1$. But elements of $\llim_{\Bb I}F(-,j)$ correspond to cocones over $D$ with vertex $j$; so if there are no such cocones then $\rlim_{\Bb J}\llim_{\Bb I}F$ is empty.
\end{proof}

The next lemma is much less well-known; it is due to F.~Foltz~\cite{foltz}.

\begin{lemma}
Suppose that $\Bb I$-limits commute with $\Bb J$-colimits in $\Set$, and that $\Bb I$ is connected but not conical. Then $\Bb J$ has cocones over diagrams of shape $(\bullet\leftarrow\bullet\to\bullet)$.
\end{lemma}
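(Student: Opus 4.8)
The plan is to argue by contradiction. Suppose $\Bb J$ has a span with apex $a$ and legs $u:a\to b_1$, $v:a\to b_2$ admitting no cocone; that is, there is no object $j$ carrying maps $s:b_1\to j$ and $t:b_2\to j$ with $su=tv$. Following the template of Lemma~2.1, I would build a functor $F:\Bb I\times\Bb J\to\Set$ for which $\llim_{\Bb I}\rlim_{\Bb J}F\cong 1$ while $\rlim_{\Bb J}\llim_{\Bb I}F=\emptyset$, contradicting the assumed commutation.

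Two facts should drive the construction. First, for any object $x$ of $\Bb J$ the colimit $\rlim_{\Bb J}\Bb J(x,-)$ is a single point, so forming the $\Bb J$-colimit first collapses each representable attached to the span. Second --- and this is where non-conicality enters --- for every object $i_0$ of $\Bb I$ one has $\llim_{\Bb I}\Bb I(i_0,-)=\emptyset$: a cone with vertex $1$ over $\Bb I(i_0,-)$ is exactly a cone over the identity functor with summit $i_0$, and there are none. Moreover, since post-composition preserves sources, any cone over a coproduct of representables must, by connectedness of $\Bb I$, land in a single summand, so limits over $\Bb I$ of coproducts of representables are also empty. Thus $\Bb I$-limits are very good at detecting emptiness, whereas $\Bb J$-colimits are very good at producing points.

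The functor $F$ should be glued together from the representables $\Bb J(a,-)$, $\Bb J(b_1,-)$, $\Bb J(b_2,-)$, suitably tensored with hom-sets of $\Bb I$, along the legs $u$ and $v$, and arranged so that $\llim_{\Bb I}F(-,j)$ is at each $j$ a subset of the set of cocones over the span with vertex $j$ --- empty by hypothesis --- whence $\rlim_{\Bb J}\llim_{\Bb I}F=\emptyset$, while $\llim_{\Bb I}\rlim_{\Bb J}F$ is forced to be $1$ by the collapse above together with connectedness of $\Bb I$. The detection on the limit side is cleanest when $\Bb I$ is a parallel pair: taking $F(-,j)$ to be the diagram $\Bb J(b_1,j)\times\Bb J(b_2,j)\prarr\Bb J(a,j)$ whose two arrows send $(s,t)$ to $su$ and to $tv$, the equalizer is precisely the set of cocones over the span with vertex $j$. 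At the opposite extreme, when $\Bb I$ is a cospan the statement is immediate: $\Bb I\op$ then maps finally onto the span shape, and Lemma~2.1 applied to a diagram $D:\Bb I\op\to\Bb J$ carrying the span already yields the cocones.

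The hard part, and the reason a single clean formula is elusive, is to make this work uniformly for every connected non-conical $\Bb I$ while keeping the colimit-first side inhabited. For a parallel pair or a one-object group there is no final functor $\Bb I\op\to(\bullet\leftarrow\bullet\to\bullet)$, so the representable recipe of Lemma~2.1 is unavailable, and the naive equalizer functor above has $\llim_{\Bb I}\rlim_{\Bb J}F=\rlim_{\Bb J}\big[\Bb J(b_1,-)\times\Bb J(b_2,-)\big]$, which collapses to $\emptyset$ exactly when the two legs share no common target. The remedy is to splice in extra copies of the representables through the span so that the $\Bb J$-colimit is nonetheless driven to a point while each local $\Bb I$-limit stays empty; concretely this realises the familiar failure of equalizers to commute with pushouts. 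Ensuring that the gluing genuinely exploits the absence of a cocone --- rather than inadvertently manufacturing one, which would make the two iterated constructions agree --- is the crux of the argument and the step I expect to require the most care.
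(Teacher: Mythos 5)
You have assembled exactly the right ingredients --- $\rlim_{\Bb J}$ of a representable is a point, non-conicality forces $\llim_{\Bb I}{\Bb I}(i_0,-)=\emptyset$, and connectedness of $\Bb I$ makes $\llim_{\Bb I}$ preserve coproducts, so that $\Bb I$-limits of coproducts of representables are empty --- and these are precisely the facts that drive the paper's argument. But the proof itself is not there: you explicitly defer the construction of $F$, calling it ``the crux of the argument and the step I expect to require the most care,'' and that deferred step is the entire content of the lemma. What you have is a correct analysis of why the representable recipe of Lemma~2.1 and the naive equalizer functor both fail, together with an accurate description of the properties the sought-after $F$ must have; this is a plan, not a proof.

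For comparison, the paper's $F$ is obtained by a quotient rather than a splicing-in of extra representables: with a cocone-less span $(j_1\stackrel{\beta}{\leftarrow}j_0\stackrel{\gamma}{\to}j_2)$, one sets $F=\bigl(\bigl(\coprod_i{\Bb I}(i,-)\bigr)\times{\Bb J}(j_0,-)\bigr)/\sim$, where $(\alpha,\delta)\sim(\alpha',\delta)$ whenever $\alpha,\alpha'$ share a codomain and $\delta$ factors through $\beta$ or $\gamma$. The quotient collapses the $\Bb I$-coordinate to a point exactly over those $\delta$ that factor through a leg, so $\llim_{\Bb I}F(-,j)$ is the union of the images of ${\Bb J}(\beta,j)^*$ and ${\Bb J}(\gamma,j)^*$ inside ${\Bb J}(j_0,j)$ --- a \emph{disjoint} union precisely because there is no cocone --- whence $\rlim_{\Bb J}\llim_{\Bb I}F$ has two elements; meanwhile a short zigzag through $(\alpha,1_{j_0})$ and $(\alpha,\beta)$ shows $\rlim_{\Bb J}F(i,-)\cong 1$, so $\llim_{\Bb I}\rlim_{\Bb J}F\cong 1$. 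Note that the contradiction obtained is $2\neq 1$, not the $\emptyset\neq 1$ you were aiming for: the absence of a cocone is exploited not to make the limit-then-colimit side empty but to keep two nonempty pieces from being glued together. If you want to complete your approach, this is the construction you would need to find.
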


\begin{proof}
Suppose not; let $(j_1\stackrel{\beta}{\leftarrow}j_0\stackrel{\gamma}{\to} j_2)$ be a diagram of the indicated shape with no cocone over it. Consider the functor \stp{F}{{\Bb I}\times{\Bb J}}{\Set} defined by
\[F(-,-) = \left.\left(\left(\coprod_{i\in\ob\ {\Bb I}}{\Bb I}\,(i,-)\right)\times {\Bb J}\,(j_0,-)\right)\right/\sim\]
where $\sim$ is the equivalence relation (clearly a congruence) which identifies all pairs $(\alpha,\delta),(\alpha',\delta)$ for which $\alpha$ and $\alpha'$ have the same codomain and $\delta$ factors through either $\beta$ or $\gamma$. It is easy to see that, for fixed $j$, $F(-,j)$ may be written as 
\[\coprod_{\delta\in A}\left(\coprod_{i\in\ob\ {\Bb I}}{\Bb I}\,(i,-)\right)\amalg\coprod_{\delta\in B}1\]
where $A$ is the set of maps \stp{\delta}{j_0}{j} which do not factor through $\beta$ or $\gamma$, and $B$ is the set of those which do; but $\llim_{\Bb I}$ preserves coproducts since $\Bb I$ is connected, and $\llim_{\Bb I}{\Bb I}\,(i,-)=\emptyset$ since $\Bb I$ is not conical, so $\llim_{\Bb I}F(-,j)\cong B$. In other words, $\llim_{\Bb I}F$ is the subfunctor of ${\Bb J}\,(j_0,-)$ which is the union of the images of ${\Bb J}\,(\beta,-)$ and ${\Bb J}\,(\gamma,-)$; and the latter union is disjoint since there is no cocone over $(\beta,\gamma)$. It follows easily that $\rlim_{\Bb J}\llim_{\Bb I}F$ has two elements. On the other hand, it is not hard to see that $\rlim_{\Bb J}F(i,-)$ has just one element for any $i$, since any two elements $(\alpha,\delta)$ and $(\alpha',\delta')$ can be linked by a zigzag of the form
\[(\alpha,\delta)\leftarrow(\alpha,1_{j_0})\to(\alpha,\beta)\sim(\alpha',\beta)\leftarrow (\alpha',1_{j_0})\to(\alpha',\delta')\ .\]
So $\llim_{\Bb I}\rlim_{\Bb J}F$ is a singleton, which provides the required contradiction.
\end{proof}

\begin{corollary}
If colimits of shape $\Bb J$ commute with equalizers in $\Set$, then $\Bb J$ is pseudo-filtered.
\end{corollary}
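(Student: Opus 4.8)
The hypothesis that colimits of shape $\Bb J$ commute with equalizers says exactly that $\Bb I$-limits commute with $\Bb J$-colimits, where $\Bb I$ is the equalizer shape $(\bullet\prarr\bullet)$, with two objects (call them $0$ and $1$) and two distinct nonidentity arrows $0\to1$. The plan is to verify the two cocone conditions that characterize pseudo-filteredness: that $\Bb J$ admits cocones over every parallel pair $(\bullet\prarr\bullet)$, and over every span $(\bullet\leftarrow\bullet\to\bullet)$. The former says parallel pairs in $\Bb J$ can be coequalized, the latter that spans complete to commuting squares; together these are equivalent to each connected component of $\Bb J$ being filtered, i.e.\ to $\Bb J$ being pseudo-filtered. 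Each condition will fall out of one of the two preceding lemmas applied to this particular $\Bb I$.

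First I would apply Lemma 2.1. The opposite of the equalizer shape is again a parallel pair, so $\Bb I\op\cong\Bb I$, and Lemma 2.1 gives cocones over every diagram of shape $(\bullet\prarr\bullet)$ in $\Bb J$. Reading off naturality, a cocone over $f,g\colon j_1\to j_2$ with vertex $j_3$ is a pair of arrows $a\colon j_1\to j_3$ and $h\colon j_2\to j_3$ with $hf=a=hg$, so $h$ coequalizes $f$ and $g$. This establishes the parallel-pair condition.

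Next I would apply Lemma 2.2, for which I must check that $\Bb I$ is connected but not conical. Connectedness is immediate. Since $\Bb I$ has only identity endomorphisms it has no nonidentity idempotents, so it coincides with its own idempotent-completion; and it has no initial object, since $0$ admits two distinct maps to $1$ while there is no arrow $1\to0$. Hence $\Bb I$ is not conical, and Lemma 2.2 supplies cocones over every span in $\Bb J$. With both conditions in hand, $\Bb J$ is pseudo-filtered.

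The one point needing genuine care is the passage from these two cocone conditions to pseudo-filteredness as defined in the Introduction (each connected component filtered): the parallel-pair condition directly gives the coequalizing requirement within each component, but to see that any two objects of a component have a common upper bound one must connect them by a zigzag and induct on its length, applying the span condition at each step to merge the bounds. This bookkeeping is routine; by contrast the non-conicality of the equalizer shape, and the two substitutions into Lemmas 2.1 and 2.2, are immediate.
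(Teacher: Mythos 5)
Your proposal is correct and follows exactly the paper's route: apply Lemma 2.1 (with $\Bb I\op$ again a parallel pair) to get cocones over parallel pairs, and Lemma 2.2 (checking the parallel pair is connected but not conical) to get cocones over spans, these two conditions sufficing for pseudo-filteredness. The extra details you supply --- the non-conicality check and the zigzag induction --- are just the routine verifications the paper leaves implicit.
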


\begin{proof}
By the two previous lemmas, $\Bb J$ has cocones over diagrams of shapes $(\bullet\prarr\bullet)$ and $(\bullet\leftarrow\bullet\to\bullet)$; but these suffice for pseudo-filteredness. 
\end{proof}

For completeness, we also record

\begin{lemma}
If colimits of shape $\Bb J$ commute with pullbacks in $\Set$, then $\Bb J$ is pseudo-filtered.
\end{lemma}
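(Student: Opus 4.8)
The plan is to mirror the proof of Corollary 2.3: pseudo-filteredness of $\Bb J$ amounts to the existence of cocones over all parallel pairs $(\bullet\prarr\bullet)$ together with cocones over all spans $(\bullet\leftarrow\bullet\to\bullet)$, so I would establish these two conditions separately. The span condition comes for free. Pullbacks are the limits of shape $\Bb I=(\bullet\to\bullet\leftarrow\bullet)$, whose opposite $\Bb I\op$ is exactly the span $(\bullet\leftarrow\bullet\to\bullet)$; moreover the cospan $\Bb I$ is connected but not conical. Hence either Lemma 2.1 (applied to $\Bb I\op$) or Lemma 2.2 immediately gives that $\Bb J$ has cocones over all spans, which is half of what we need.

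The real work is the parallel-pair condition, and here there is an asymmetry with the equalizer case: for equalizers $\Bb I\op$ was again a parallel pair, so Lemma 2.1 delivered the conclusion directly, whereas now $\Bb I\op$ is only a span. So I would argue by contradiction, supposing that $f,g:j_0\prarr j_1$ has no cocone, i.e.\ that there is no $h:j_1\to j$ with $hf=hg$. Since the test functor is only required to land in $\Set$, I can realize an equalizer as a pullback by putting a product of representables at the apex of the cospan: take $F(-,j)$ to be the cospan $\Bb J(j_1,j)\to\Bb J(j_1,j)\times\Bb J(j_0,j)\leftarrow\Bb J(j_1,j)$ with legs $h\mapsto(h,hf)$ and $h\mapsto(h,hg)$. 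Then $\llim_{\Bb I}F(-,j)$ is the equalizer $\{h:j_1\to j\mid hf=hg\}$, which is empty for every $j$ by hypothesis, so $\rlim_{\Bb J}\llim_{\Bb I}F=\emptyset$; the goal is to show that the other iterated value $\llim_{\Bb I}\rlim_{\Bb J}F$ is nonempty, contradicting commutation.

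The hard part will be precisely this last step, and I expect it to be the main obstacle. Computing in the other order, the two outer vertices give $\rlim_{\Bb J}\Bb J(j_1,-)\cong1$, while the apex gives $C:=\rlim_{\Bb J}(\Bb J(j_1,-)\times\Bb J(j_0,-))$, and $\llim_{\Bb I}\rlim_{\Bb J}F$ is nonempty exactly when the two classes $[(1_{j_1},f)]$ and $[(1_{j_1},g)]$ coincide in $C$. These need not coincide: already for $\Bb J$ a nontrivial group with $f,g$ two distinct elements, a direct check shows both iterated results are empty, so the construction as stated detects nothing. The genuine content is therefore to gain control of this apex colimit, and I do not expect a single representable-based functor to suffice. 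A plausible route is to combine the construction above with a product-type pullback (apex the terminal functor $1$), exploiting that for a $\Bb J$ lacking a coequalizing cocone the functor $\rlim_{\Bb J}$ must fail to preserve the relevant pullback; equivalently, one can try to show that $\rlim_{\Bb J}$ commuting with pullbacks forces it to commute with equalizers and then invoke Corollary 2.3, the crux being that the canonical comparison $\rlim_{\Bb J}\,\mathrm{Eq}\to\mathrm{Eq}\,\rlim_{\Bb J}$, which pullback-preservation already makes monic, is also epic. The delicate point in either formulation is to make one argument work uniformly, covering both the group case (where a siftedness-style witness is available) and genuinely sifted but non-pseudo-filtered shapes such as $\Delta\op$, where product witnesses alone cannot detect the missing coequalizer.
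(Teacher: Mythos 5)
Your proposal has a genuine gap: the parallel-pair half of the argument is never actually carried out. You correctly observe that cocones over spans come for free from Lemma 2.1 (taking $\Bb I$ to be the cospan), and your diagnosis of why the single representable-based pullback fails is accurate --- for a nontrivial group $\Bb J$ the pullback $A\times_{A\times B}A$ realizing the equalizer \emph{is} preserved by $\rlim_{\Bb J}$ even though the equalizer is not, so no one pullback of that shape can serve as a universal witness. But having identified the obstacle, you end with a menu of unexecuted strategies and an explicit admission that the crux (epicness of the comparison map $\rlim_{\Bb J}\mathrm{Eq}\to\mathrm{Eq}\,\rlim_{\Bb J}$) is unresolved. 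That is the entire content of the lemma, so as written this is not a proof. The paper disposes of it in one line by citing the standard fact (\cite{elephant}, A1.2.9) that a functor out of a finitely complete category which preserves pullbacks preserves all finite connected limits; applied to $\stp{\rlim_{\Bb J}}{\Set^{\Bb J}}{\Set}$ (note that commuting with pullbacks is precisely preservation of pullbacks by this functor, since limits in $\Set^{\Bb J}$ are pointwise), this gives commutation with all finite connected limits, which is the definition of pseudo-filteredness.

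The consolation is that the ``plausible route'' you sketch --- combining your equalizer-as-pullback square with a product-type pullback whose apex is the terminal functor $1$ --- is exactly the missing ingredient, and your worry about making it work ``uniformly'' is unfounded because the argument is formal rather than witness-by-witness. Concretely: preservation of the pullback $A\times_1 B$ identifies $\rlim_{\Bb J}(A\times B)$ with the fibre product $\rlim_{\Bb J}A\times_{\rlim_{\Bb J}1}\rlim_{\Bb J}B$, so a map into $\rlim_{\Bb J}(A\times B)$ is determined by its two components. Given $u$ into $\rlim_{\Bb J}A$ equalizing $\rlim_{\Bb J}f$ and $\rlim_{\Bb J}g$, the composites with the images of the two graphs $(1,f),(1,g)\colon A\to A\times B$ then agree (they have the same components $u$ and $(\rlim_{\Bb J}f)u=(\rlim_{\Bb J}g)u$), so $u$ factors through the preserved pullback $A\times_{A\times B}A$, i.e.\ through $\rlim_{\Bb J}\mathrm{Eq}(f,g)$. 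This shows the comparison map is epic (you already noted it is monic), hence $\rlim_{\Bb J}$ preserves equalizers, and Corollary 2.3 finishes. Note that no preservation of the terminal object is needed, which is why the argument covers disconnected $\Bb J$; and for your test cases the argument correctly locates the failing pullback in different places (for a nontrivial group it is $A\times_1 A$ that fails; for $\Delta\op$ it is the graph-intersection square). You should either complete the argument along these lines or simply cite the general fact.
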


\begin{proof}
This follows from the well-known fact (\cite{elephant}, A1.2.9) that if $\cal C$ is a category with all finite limits then any functor ${\cal C}\to{\cal D}$ which preserves pullbacks preserves all finite connected limits.
\end{proof}

Since both pullbacks and equalizers are required to generate the class of finite connected limits (in the sense that a category has all finite connected limits iff it has pullbacks and equalizers, though it can have either pullbacks or equalizers without possessing the other), the fact that either pullbacks or equalizers suffice to generate the closure of finite connected limits (in the sense of the Galois connection mentioned in the Introduction) is striking.

\section{Some new classes of commuting limits and colimits}

By Foltz's Lemma 2.2, if we wish to find a closed class of colimits lying between (pseudo-filtered colimits) and (all small colimits) (equivalently, a closed class of limits lying between (conical limits) and the closure of (finite connected limits)), we must consider colimits over categories which have cocones over diagrams of shape $(\bullet\leftarrow\bullet\to\bullet)$, but not over those of shape $(\bullet\prarr\bullet)$. In trying to construct such a category, one might be led to the following example: the objects of $\Bb J$ are the natural numbers, there is one (identity) morphism $n\to n$ for each $n$, and two morphisms $n\prarr m$ (labelled $0$ and $1$) for each $n<m$, and composition of non-identity morphisms is defined by addition of labels mod $2$, i.e.\ the composite of two morphisms with the same label is labelled $0$ and that of two morphisms with different labels is labelled $1$. And indeed, there are non-conical limits which commute with colimits over this category; specifically, limits over the cyclic group of order 3 (or, more generally, any nontrivial group of odd order) considered as a category. (More generally still, if we modify $\Bb J$ so that it has $k>1$ morphisms $n\to m$ whenever $n<m$, with labels $0$ to $k-1$ and composition given by addition mod $k$, then colimits over $\Bb J$ commute with limits over any group of order prime to $k$.)

But it is not necessary for the colimit category $\Bb J$ to have more than one object; we can take it to be a group, too. Of course, any group satisfies the condition that it has cocones over diagrams of shape $(\bullet\leftarrow\bullet\to\bullet)$ (semigroup-theorists call this the left Ore condition), but no nontrivial group has cocones over diagrams of shape $(\bullet\prarr\bullet)$. We may now prove

\begin{lemma}
Let $G$ and $H$ be finite groups of coprime orders, considered as categories. Then limits over $G$ commute with colimits over $H$ in $\Set$.
\end{lemma}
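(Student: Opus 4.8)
The plan is to compute both iterated expressions explicitly, using the standard description of limits and colimits over a one-object category. A functor \stp{F}{G\times H}{\Set} is precisely a set $X=F(\ast,\ast)$ carrying commuting (left) actions of $G$ and $H$. Now $\llim_G F$ is the set $X^G$ of elements fixed by every element of $G$, and $\rlim_H F$ is the orbit set $X/H$; since the two actions commute, $H$ acts on $X^G$ and $G$ acts on $X/H$, so the iterated constructions make sense and give $\rlim_H\llim_G F = X^G/H$ and $\llim_G\rlim_H F = (X/H)^G$. The canonical comparison map $\rlim_H\llim_G F\to\llim_G\rlim_H F$ sends the $H$-orbit of a $G$-fixed element $x$ to the orbit $Hx$, which is visibly a $G$-fixed point of $X/H$. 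So the whole statement reduces to proving that the canonical map $X^G/H\to (X/H)^G$ is a bijection for every such $X$.

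Injectivity is immediate and uses nothing about the orders: if $x,x'\in X^G$ satisfy $Hx=Hx'$ in $X/H$, then $x'=hx$ for some $h\in H$, so $x$ and $x'$ already lie in one $H$-orbit of $X^G$. The content is therefore surjectivity. Given a $G$-fixed $H$-orbit $O\in(X/H)^G$, I must exhibit a $G$-fixed element lying in $O$. First I would observe that, because the $G$- and $H$-actions commute and $O$ is fixed by $G$ as a point of $X/H$, the underlying subset $O\subseteq X$ is stable under $G$; thus $G$ acts on the finite set $O$, on which $H$ already acts transitively.

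The decisive point is then that \emph{all $G$-orbits in $O$ have the same cardinality} $d$: for any $h\in H$, translation by $h$ is a $G$-equivariant bijection of $O$ (the actions commute), so it carries each $G$-orbit isomorphically onto another, and transitivity of $H$ makes every $G$-orbit arise this way. Hence $d$ divides $|O|$; but $d$ also divides $|G|$, and $|O|$ divides $|H|$, so $d$ divides $\gcd(|G|,|H|)=1$. Therefore $d=1$, the $G$-action on $O$ is trivial, and every element of $O$ is $G$-fixed, which gives surjectivity. I expect this coprimality step — converting the equal-orbit-size observation into triviality of the $G$-action — to be the crux of the argument; by contrast, the identification of $\llim_G$ with fixed points and $\rlim_H$ with orbits, and the verification that the comparison map takes the asserted form, is routine bookkeeping.
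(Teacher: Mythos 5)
Your proof is correct, and the overall reduction is the same as the paper's: both identify $\llim_G$ with $G$-fixed points and $\rlim_H$ with $H$-orbits, and both reduce the problem to showing that every $G$-fixed $H$-orbit consists of $G$-fixed elements (injectivity of the comparison map being, as you say, automatic). Where you diverge is in the coprimality step. The paper argues element-by-element: for $a$ in a fixed orbit and $g\in G$ one has $(g,1)a=(1,h)a$ for some $h\in H$, whence $(g^n,1)a=(1,h^n)a$ for all $n$ by commutativity, so the least period of $a$ under $g$ divides both the order of $g$ and the order of $h$ and is therefore $1$. You instead make a global counting argument on the fixed orbit $O$: translation by elements of $H$ is a $G$-equivariant bijection permuting the $G$-orbits in $O$ transitively, so they all have a common size $d$ with $d\mid |G|$ and $d\mid |O|\mid |H|$, forcing $d=1$. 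Both arguments are elementary and correct; yours is slightly more structural (orbit--stabilizer and equivariance rather than manipulation of individual elements) and makes the parenthetical remark in the paper's proof --- that such orbits not merely contain but \emph{consist of} fixed points --- the explicit conclusion rather than an aside. The paper's version has the mild advantage of pointing toward the refinement in its next lemma (the Goursat analysis of a single $(G\times H)$-orbit), but nothing is lost in your route for the statement at hand.
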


\begin{proof}
Of course, a functor $G\to\Set$ is just a set equipped with an action of $G$; its limit is its set of $G$-fixed points, and its colimit is the set of $G$-orbits. So we need to show that, if $G\times H$ acts on a set $A$, then the $H$-orbits which are fixed under the induced action of $G$ on these orbits are exactly the orbits which contain (equivalently, consist of) $G$-fixed elements of $A$. But this is easy: if $a$ is any element of such a fixed orbit, then for any $g\in G$ we have $(g,1)a=(1,h)a$ for some $h\in H$, and then since $(g,1)$ and $(1,h)$ commute we have $(g^n,1)a=(1,h^n)a$ for all $n$. In particular, the least $n>0$ such that $(g^n,1)a=a$ must divide both the order of $g$ and that of $h$; but these two are coprime, and so $(g,1)a=a$.
\end{proof}

In fact, by an (only slightly) more sophisticated argument, we may obtain a necessary and sufficient condition on a pair of groups $G,H$ for limits over $G$ to commute with colimits over $H$.

\begin{lemma}
Given two groups $G$ and $H$, limits over $G$ commute with colimits over $H$ in $\Set$ iff no nontrivial quotient group of $G$ is isomorphic to a subquotient group of $H$.
\end{lemma}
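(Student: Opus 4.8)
The plan is to translate everything into the language of group actions, as in the proof of Lemma 3.2. A functor $G\times H\to\Set$ is a set $A$ carrying commuting actions of $G$ and $H$; under this identification $\rlim_H\llim_G F$ is $A^G/H$ (the set of $H$-orbits of the $G$-fixed points) and $\llim_G\rlim_H F$ is $(A/H)^G$ (the set of $H$-orbits that are fixed, as orbits, by $G$). The canonical comparison map sends the class of a $G$-fixed point $a$ to the orbit $Ha$, which is indeed $G$-fixed because $G$ and $H$ commute. First I would observe that this map is always injective: two $G$-fixed points lie in the same $H$-orbit of $A^G$ precisely when they lie in the same $H$-orbit of $A$. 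Hence commutation is equivalent to surjectivity for every $A$, i.e.\ to the statement that \emph{every $G$-fixed $H$-orbit contains a $G$-fixed element}.

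Next I would reduce to a single orbit. A $G$-fixed $H$-orbit $O$ is, as an $H$-set, isomorphic to $H/K$ for $K$ the stabilizer of a chosen point, and the (commuting) $G$-action restricts to an action of $G$ on $O$ by $H$-equivariant automorphisms, i.e.\ to a homomorphism $\phi\colon G\to\mathrm{Aut}_H(H/K)$. The key computation is the standard identification $\mathrm{Aut}_H(H/K)\cong N_H(K)/K$, together with the fact that this group acts \emph{freely} on $H/K$ (the stabilizer of any point is trivial). A free action has a global fixed point iff the acting group is trivial, so $O$ contains a $G$-fixed element iff $\phi$ is trivial. Conversely, given any $K\le H$ and any $\phi\colon G\to N_H(K)/K$, letting $G$ act on $H/K$ through $\phi$ produces a genuine $(G\times H)$-set on which the single orbit is $G$-fixed but has no $G$-fixed point precisely when $\phi\ne 1$. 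Thus commutation \emph{fails} iff there exist $K\le H$ and a nontrivial homomorphism $G\to N_H(K)/K$.

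It then remains to recognise this condition as the existence of a common nontrivial subquotient. In one direction, a nontrivial $\phi\colon G\to N_H(K)/K$ has image a nontrivial group that is simultaneously a quotient of $G$ (namely $G/\ker\phi$) and a subgroup of the subquotient $N_H(K)/K$; since a subgroup $P/K\le N_H(K)/K$ with $K\le P\le N_H(K)$ is itself the subquotient $P/K$ of $H$, the image is a nontrivial common subquotient. In the other direction, suppose $G/N\cong L/M$ is a nontrivial group with $N\trianglelefteq G$ and $M\trianglelefteq L\le H$; taking $K=M$ we have $L\le N_H(M)$, so $L/M\le N_H(M)/M$, and the composite $G\twoheadrightarrow G/N\cong L/M\hookrightarrow N_H(M)/M$ is a nontrivial homomorphism. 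Combining the two paragraphs, commutation fails iff some nontrivial quotient of $G$ is isomorphic to a subquotient of $H$, which is the contrapositive of the assertion.

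The routine verifications (that $A^G/H\to(A/H)^G$ is the canonical map and is injective, and the identification $\mathrm{Aut}_H(H/K)\cong N_H(K)/K$) are standard. The main point requiring care, and the step I expect to carry the real content, is the freeness observation that turns ``$O$ has a $G$-fixed point'' into the clean group-theoretic condition ``$\phi$ is trivial''; getting the bookkeeping exactly right between subgroups of $N_H(K)/K$ and subquotients of $H$, in both directions, is where a slip is most likely.
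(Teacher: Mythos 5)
Your argument is correct, but it reaches the result by a genuinely different route from the paper. The paper decomposes a $(G\times H)$-set into its $(G\times H)$-orbits, writes a transitive $(G\times H)$-set as the coset space of a subgroup $S\leq G\times H$, and invokes Goursat's Lemma to present $S$ via data $L_1\lhd K_1\leq G$, $L_2\lhd K_2\leq H$ and an isomorphism $K_1/L_1\cong K_2/L_2$; the failure of commutation is then read off as the case $K_1=G$, $L_1\neq G$, which exhibits the nontrivial quotient $G/L_1$ of $G$ as a subquotient of $H$. You instead observe that the comparison map $A^G/H\to (A/H)^G$ is always injective (since $A^G$ is an $H$-stable subset), reduce to a single $G$-fixed $H$-orbit $O\cong H/K$, and identify the $G$-action on $O$ with a homomorphism $\phi\colon G\to \mathrm{Aut}_H(H/K)\cong N_H(K)/K$; the key point that this group acts freely on $H/K$ converts ``$O$ has a $G$-fixed point'' into ``$\phi$ is trivial'', and the translation between nontrivial homomorphisms $G\to N_H(K)/K$ and common nontrivial subquotients is exactly as you say. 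Your route avoids Goursat's Lemma entirely (in effect re-deriving the piece of it that is needed), makes the always-injectivity of the comparison map explicit, and produces the counterexample $(G\times H)$-set $H/K$ constructively in the ``only if'' direction; the paper's route is shorter if one takes Goursat as known, and gives a complete classification of the transitive $(G\times H)$-sets along the way. Both proofs are sound.
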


\begin{proof}
Since any $(G\times H)$-set is the coproduct of its orbits (and $\llim_G$ and $\rlim_H$ both preserve coproducts), it suffices to consider a single $(G\times H)$-orbit, i.e.\ a $(G\times H)$-set isomorphic to the set $(G\times H):S$ of left cosets of a subgroup $S$ of $G\times H$. By Goursat's Lemma~\cite{lambek}, any subgroup $S \leq G\times H$ is of the form $\{(g,h)\in K_1\times K_2\mid \theta(gL_1)=hL_2\}$ where $L_1\lhd K_1\leq G$, $L_2\lhd K_2\leq H$ and \stp{\theta}{K_1/L_1}{K_2/L_2} is an isomorphism. It is easy to see that the $H$-orbits of $(G\times H):S$ form a single $G$-orbit, which may be identified with $G:K_1$; so there is a $G$-fixed $H$-orbit iff $K_1=G$. On the other hand, $(G\times H):S$ has $G$-fixed points iff $L_1=G$ (in which case $S$ is simply $G\times L_2$); so the given condition is equivalent to saying that every transitive $(G\times H)$-set has either a $G$-fixed point or no $G$-fixed $H$-orbits.
\end{proof}

In particular, we may conclude that there are infinitely many closed classes of colimits intermediate between pseudo-filtered colimits and all small colimits: for each finite simple group $G$, the class of colimits which commute with limits over $G$ contains colimits over all groups which do not have $G$ as a subquotient, but not over $G$ itself. Thus the problem of classifying all closed classes of colimits which contain all pseudo-filtered colimits is at least as hard as that of classifying finite simple groups.

\end{document}